\documentclass[letterpaper, 10 pt, conference]{ieeeconf}  
\IEEEoverridecommandlockouts    
\usepackage[compatibility=false]{caption}
\usepackage{silence}
\usepackage{subfig}

	\usepackage{cite}
    \usepackage{hyperref}

	\usepackage{amssymb,amsfonts,mathptmx}
	 
	\usepackage{amsthm}
	\usepackage{graphicx}
\usepackage{float}
	\usepackage{cite}
	\usepackage{enumerate}
	\usepackage{epsfig}
\usepackage[compatibility=false,font=small,labelfont=bf]{caption}
	\usepackage{amsmath} 
	\usepackage{bm}
	\usepackage{epstopdf,array}
	\usepackage{algorithm}
	\usepackage{algpseudocode}
	\usepackage{multirow,multicol}
	\let\labelindent\relax
	\usepackage{subfig,braket,enumitem,adjustbox,booktabs}
	\usepackage{color}
	\usepackage{float}
	\newfloat{algorithm}{t}{lop}
	\usepackage{pgf,tikz}
	\usetikzlibrary{arrows}
    \usepackage{xcolor}

\newcommand{\real}{\mathbb{R}}

\newcommand{\T}{^{\top}}

\newcommand{\grad}{\nabla}
\newcommand{\tr}{\operatorname{tr}}

\newcommand{\Delxi}{\Delta \xi}
\newcommand\pd{\partial}

\newcommand{\kb}{\overline{k}}
\newcommand{\td}{\text{d}}
\newcommand{\Dc}{\mathcal{D}}

\newcommand{\longthmtitle}[1]{\mbox{}{\textit{(#1).}}}

\newcommand{\oprocendsymbol}{\hbox{$\bullet$}}
\newcommand{\oprocend}{\relax\ifmmode\else\unskip\hfill\fi\oprocendsymbol}

	\newtheorem{theorem}{Theorem}[section]
	\newtheorem{proposition}[theorem]{Proposition}
	\newtheorem{lemma}[theorem]{Lemma}
	\newtheorem{corollary}[theorem]{Corollary}
	\newtheorem{remark}[theorem]{Remark}

\usepackage{enumitem}

\usepackage{float}
\begin{document}
\title{Control Co-design of systems with parabolic PDE dynamics }
\author{Antika Yadav$^*$ and Prasad Vilas Chanekar
\thanks{$^{*}$Corresponding author.} 
\thanks{This research is supported by SERB Start-up Research Grant - SRG/2023/001636 and SR/PURSE/2023/172, DST-PURSE, Govt. of India.}
\thanks{AY and PVC are with the Department of Electronics and
		Communication Engineering, Indraprastha Institute of Information Technology, New Delhi, India 110020, {\tt \{antikai,\,prasad\}}@iiitd.ac.in}}

\maketitle
\begin{abstract}
In this paper, we study the control co-design (CCD) synthesis problem for a class of systems with parabolic partial differential equation (PDE) dynamics. We first derive a sufficient stability condition for the PDE. By spatially discretizing the PDE and using the sufficient stability condition, we propose a computationally tractable approximate CCD problem.  We solve the approximate CCD problem using a gradient-based method.  Finally, we justify our proposed approach through an example.  
\end{abstract}
\section{Introduction}
 Systems whose dynamics is governed by partial differential equations (PDEs) are relevant in various physical and engineering applications such as thermal regulation, structural vibration suppression, chemical process control, etc. A significant challenge in designing PDE systems is their infinite-dimensional nature. 
 
 
\textit{Literature Review:} Boundary state feedback control is a very effective method in the control of PDE systems. In boundary feedback control, the actuation is applied only at the boundary of the spatial domain. This makes implementation very simple in practical applications. In this context, backstepping methods have been widely explored in the literature for PDE control \cite{boskovic2001boundary},\cite{1470714}. In \cite{Li2017FTStability}, the problem of finite-time (FT) stability and stabilization for distributed parameter systems is addressed.  In \cite{9980532}, a reinforcement learning-based boundary control strategy that uses a solution to a spatial Riccati-like equation for parabolic PDEs is presented. In \cite{9134871}, a saturated feedback control has been derived, which locally stabilizes a linear reaction-diffusion equation. Distributed actuator selection by proposing a primal–dual algorithm that achieves optimality has been addressed in \cite{8392433}. 

System design parameters influence the system performance through the system dynamics \cite{skelton1989model}. Traditional system design (optimize system parameters and then control) approach results in a sub-optimal system \cite{fathy2001coupling}. Optimizing system design and control gains simultaneously may lead to an optimal system \cite{10.1115/1.4027335}. This approach is known as the Control Co-design (CCD) process. While CCD has been extensively studied for lumped-parameter systems (i.e., ODE-based models)\cite{10.1115/1.4027335,8812969,chanekar2018co}, its extension to PDE systems is largely unexplored. The existing literature regarding CCD is mainly concentrated on actuator/sensor location as the design variable  \cite{7946139},\cite{15M1014759}. \cite{18M1171229}.  This research gap motivates the present work.
 While substantial research exists on PDE stabilization and optimal control, most existing approaches assume fixed plant parameters and focus exclusively on controller synthesis. In contrast, this work considers the simultaneous optimization of physical design parameters and controller gains under the control co-design framework. 

\textit{ Contributions:}
The main contributions of this paper are:

\begin{enumerate}
\item We formulate a control co-design (CCD) problem for a class of one-dimensional parabolic PDE systems with Neumann boundary feedback.  We derive sufficient stability conditions for the PDE system in terms of the system's design parameters and controller gains. 

\item Using the derived sufficient stability conditions and discretization of the PDE in the spatial domain we propose a tractable approximate CCD formulation. We then develop a gradient-based optimization algorithm for computing optimal design and stabilizing controller parameters for the PDE system.

\item We show the efficacy of our proposed formulation and solution procedure through a numerical example showcasing different types of CCD problems.
\end{enumerate}
The paper is organized as follows: Section~II presents the preliminaries and problem formulation. Section~III presents the CCD problem reformulation followed by solution procedure and numerical examples in Section~IV and Section~V. Section~VI concludes the paper.

\section{Preliminaries and Problem Statement}\label{prelim}
Consider\footnote{Notations: $\mathbb{R}$ denotes the set of real numbers. A matrix $A \in \mathbb{R}^{m \times n}$ has $m$ rows and $n$ columns. $A \in \mathbb{R}^{m}$ represents m - dimensional vector. The symbols $A^\top$ and $\operatorname{tr}(A)$ represent the transpose and trace of the matrix $A$, respectively. $||.||$  represents the Euclidean norm. For a symmetric matrix $P \in \mathbb{R}^{n \times n}$, $P \succeq 0$ (resp. $P \succ 0$) indicates that $P$ is positive semi-definite (resp. positive definite). $x_{\theta}$ denotes the partial derivative of x with respect to  $\theta$ i.e. $\frac{\partial x}{\partial \theta}$. The space \( L^2(0,1) \) is a Lebesgue space of square-integrable functions over the interval \( (0,1) \) defined as: $ L^2(0,1):= \left\{ f : (0,1) \to \mathbb{R} \;\middle|\; \int_0^1 |f(\xi)|^2 \, d\xi < \infty \right\}$. The Sobolev space is defined as $H^1(0,1) := \left\{ x \in L^2(0,1) \;\middle|\; x_\xi \in L^2(0,1) \right\}$. $C([0,\infty); H^1(0,1))$: space of continuous functions from $[0,\infty)$ into $H^1(0,1)$. For notational simplicity, the PDE state $x(\xi,t)$ and its spatial derivative $x_\xi(\xi,t)$ are denoted by $x$ and $x_\xi$, respectively. Furthermore, for a fixed spatial point $\bar{\xi}\in[0,1]$, the quantities $x(\bar{\xi},t)$ and $x_\xi(\bar{\xi},t)$ are denoted by $x(\bar{\xi})$ and $x_\xi(\bar{\xi})$, respectively, whenever the time dependence is clear from the context. For a function $f$, notation $\grad_{\theta}f$ denotes the gradient of $f$ with respect to $\theta$. J denotes the cost function. $\operatorname{diag}(\begin{bmatrix}a_1, a_2, \ldots, a_n\end{bmatrix})$ denotes the diagonal matrix with entries $a_1, a_2, \ldots, a_n$ on its main diagonal. Let $I$ denote the identity matrix of appropriate dimension.} a  system described by the parabolic PDE \cite{strauss2007partial} in one spatial dimension (1D),
\begin{align} \label{dynamics1}
    &x_{t}(\xi,t) = ax_{\xi\xi}(\xi,t) + b x(\xi,t),\;  x(\xi,0) = x_0(\xi) \nonumber\\
    &x_{\xi}(0,t) = u_1  , \;x_{\xi}(1,t) = u_2     \\
    &\text{where}\; \;  u_1 = k_1x(0,t),\; u_2 = k_2x(1,t).\nonumber
    \end{align}
where $t>0$ denotes time and $\xi\in[0,1]$ denotes the spatial variable, $ a\in \real,\; b \in \real$ are the system parameters. 
 $x(\xi,t) \in C\big([0,\infty); H^1(0,1)\big)$ is the system state variable with $x_0(\xi)$ as the initial condition. The control $u = [u_1\; u_2]\T \in \real^2$ is applied to the boundary of the system with $u_1$ applied at $\xi =0$ and  $u_2$  at $\xi =1$. The parameters $k_1\in\real,\; k_2 \in \real$ are the stabilizing linear static feedback control gains. The system has the equilibrium state $x_e(\xi)\in \real.$ Note that $x_e = 0$ is one of the equilibrium states of the dynamics \eqref{dynamics1}.

 The CCD procedure involves simultaneous optimization of the system parameters and the control gains \cite{10.1115/1.4027335}. Mathematically, we represent the  CCD optimization problem as,
\begin{align}
 \label{ccd1}
&\notag \min_{d,\,k_1,\,k_2}  \; J = f_d(d)+ \int_{0}^{\infty} \int_{0}^{1}
\left(qx^2 + ru\T u\right)\td\xi \td t\\
 \notag \text{s.t.} \quad &x_t = ax_{\xi\xi} +bx,\; x(\xi,0) = x_0(\xi),\\
& x_\xi(0,t) = k_1x(0,t)=u_1,\; x_\xi(1,t) = k_2x(1,t)=u_2,   \\
&\notag x(\xi,t) \to x_e(\xi) \; \text{ as }\; t \to \infty,\\
&\notag (\underline{d},\,\underline{k}_1,\,\underline{k}_2)  \leq (d,\,k_1,\,k_2)\leq (\overline{d},\,\overline{k}_1,\,\overline{k}_2),\\
&\notag (\xi,\;t) \in [0,\;1]\times [0,\; \infty), \; u = [u_1 \; u_2]\T.
\end{align}
Here, $d\in\real^{n_d}$ is the design parameter and $n_d$ is the number of design parameters. $ f_d : \real^{n_d} \rightarrow \mathbb{R} $ is the design objective function. 
$q \in \real,\; q \geq 0$, \quad $r \in \real,\; r>0$
are constant weighting coefficients.
The term $\int_{0}^{\infty} \int_{0}^{1}
\left(qx^2 + ru\T u\right)\td\xi \td t$ quantifies the quadratic control cost of the system \eqref{dynamics1}.
The constraint $x(\xi,t) \to x_e(\xi) \; \text{ as }\; t \to \infty$ ensures the stability of the co-designed system. The problem \eqref{ccd1} is a time-dependent, nonconvex and nonlinear optimization problem that is generally computationally challenging to solve. For system  \eqref{dynamics1}, $0\leq n_d\leq 2$ and the design variable $d$ can be $a$ or $b$ or $[a \quad b]\T.$. For a fixed $d$, we have $n_d=0$ and  \eqref{ccd1} becomes an optimal control gain synthesis problem. When $f_d(d) =0$, the design parameter $d$ influences the system CCD synthesis process through the system dynamics \eqref{dynamics1}.  Next, we present the reformulation of problem \eqref{ccd1}.


\section{CCD Problem Reformulation}
\label{prob-reformulation}

The CCD problem formulated in \eqref{ccd1} is an infinite-dimensional optimization problem constrained by a parabolic PDE. A direct solution to \eqref{ccd1} is a computationally challenging task.  In this section, we derive a computationally tractable approach to solve the CCD problem. We first establish sufficient stability conditions for the closed-loop PDE in terms of the system parameters and boundary feedback gains. We then discretize the PDE in the spatial domain to get a system of linear ordinary differential equations (ODEs). Finally using standard results from control theory we obtain a time-independent optimization problem with algebraic constraints.


\subsection{PDE System Stability}
\label{pde-stability}
We use the Lyapunov functional approach for infinite-dimensional systems \cite{CurtainZwart,Pazy} to derive the (sufficient) stability conditions for system \eqref{dynamics1} in terms of $a,\:b, \: k_1,\:k_2$.
%
\begin{theorem}\longthmtitle{Sufficient stability conditions for PDE \eqref{dynamics1}}\label{stability1}
Consider the PDE \eqref{dynamics1} with the equilibrium state  $x_e=0$.
Then the system is asymptotically stable about  $x_e=0$ if \\
$\kb_1 - a < 0, \quad 5\kb_1 -a +4b < 0,\quad
2a k_2+\kb_1 + a <0 $\\
where $\kb_1 = \max\{0, \; -ak_1\}$.
\end{theorem}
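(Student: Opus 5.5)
The plan is a Lyapunov argument with the $L^2$ energy functional
\[
V(t)=\tfrac12\int_0^1 x^2(\xi,t)\,\td\xi ,
\]
in the spirit of \cite{CurtainZwart,Pazy}. I will assume that for $x_0\in H^1(0,1)$ the closed-loop system \eqref{dynamics1} has a solution in $C([0,\infty);H^1(0,1))$, regular enough to differentiate $V$ and integrate by parts; the system is a self-adjoint Sturm--Liouville operator with Robin boundary conditions. The goal is $\dot V\le -cV$ for some $c>0$, which gives exponential, hence asymptotic, decay of $\|x\|_{L^2}$ to $x_e=0$.

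\emph{Step 1: derivative of $V$.} Differentiating along \eqref{dynamics1}, integrating $\int x\,x_{\xi\xi}$ by parts and substituting the boundary feedback $x_\xi(0)=k_1x(0)$, $x_\xi(1)=k_2x(1)$ gives
\[
\dot V = ak_2x(1)^2-ak_1x(0)^2-a\int_0^1 x_\xi^2\,\td\xi+b\int_0^1x^2\,\td\xi .
\]
The sign of $-ak_1$ is unknown, so I bound $-ak_1x(0)^2\le \kb_1x(0)^2$. This is exactly where $\kb_1=\max\{0,-ak_1\}$ enters.

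\emph{Step 2: remove the boundary value $x(0)$.} I will use two elementary one-dimensional inequalities.
\begin{enumerate}
\item From $x(0)^2=x(1)^2-\int_0^1 2xx_\xi\,\td\xi$ and Young's inequality,
\[
x(0)^2\le x(1)^2+\varepsilon\int_0^1 x^2\,\td\xi+\varepsilon^{-1}\int_0^1 x_\xi^2\,\td\xi .
\]
\item From $x(\xi)=x(1)-\int_\xi^1x_\xi$, a Poincaré-type bound
\[
\int_0^1x^2\,\td\xi\le 2x(1)^2+2\int_0^1x_\xi^2\,\td\xi .
\]
\end{enumerate}
Substituting the first inequality yields
\[
\dot V\le (ak_2+\kb_1)x(1)^2+(\kb_1\varepsilon^{-1}-a)\int_0^1 x_\xi^2\,\td\xi+(b+\kb_1\varepsilon)\int_0^1x^2\,\td\xi .
\]
The hypothesis $\kb_1-a<0$ forces $a>0$, so the $x_\xi^2$ coefficient is negative for $\varepsilon\approx1$.

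\emph{Step 3: make the quadratic form negative.} I split the negative gradient term, $-a\int x_\xi^2=-\lambda a\int x_\xi^2-(1-\lambda)a\int x_\xi^2$. Using the Poincaré bound backwards,
\[
-\int_0^1 x_\xi^2\,\td\xi\le -\tfrac12\int_0^1x^2\,\td\xi+x(1)^2 ,
\]
I convert part of the gradient term into a negative $\int x^2$ term at the price of a positive $x(1)^2$ term. This leaves three coefficients, multiplying $x(1)^2$, $\int x_\xi^2$ and $\int x^2$. I will then choose the Young parameter $\varepsilon$ and the split $\lambda$ (I expect values like $\varepsilon=1$, $\lambda=\tfrac12$, or factors $\tfrac14$) so that:
\begin{itemize}
\item the $x(1)^2$ coefficient is proportional to $2ak_2+\kb_1+a$;
\item the $\int x_\xi^2$ coefficient is proportional to $\kb_1-a$;
\item the $\int x^2$ coefficient is proportional to $5\kb_1-a+4b$.
\end{itemize}
All three are then negative by hypothesis. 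Dropping the nonpositive $x(1)^2$ and $x_\xi^2$ terms gives $\dot V\le -cV$ with $c$ a positive multiple of $-(5\kb_1-a+4b)$, and Gr\"onwall's inequality finishes the proof.

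\emph{Main obstacle.} The hard step is Step 3: the bookkeeping of Young/Poincaré weights so that the resulting coefficients match the three stated conditions exactly. The integer constants $5$, $4$ and $2$ indicate a specific choice. If my first choice does not reproduce them, I will leave $\varepsilon$ and $\lambda$ free, write each coefficient as an affine function of $a$, $b$, $\kb_1$, $k_2$, and solve for the weights that make the three coefficients positive multiples of the stated expressions. Any choice giving a negative definite form still proves stability under conditions at least as weak as those stated.
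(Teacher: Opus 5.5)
Your route is the paper's. You use the same functional $V=\tfrac12\int_0^1x^2\,\td\xi$, the same integration by parts, the same bound $-ak_1x(0)^2\le\kb_1x(0)^2$, the same Agmon/Young step (the paper's choice is your $\varepsilon=1$), and a Poincar\'e-type bound that trades $\int x_\xi^2$ for $\int x^2$ at the price of an $x(1)^2$ term. Your Gr\"onwall ending, $\dot V\le -cV$ with $c=-\tfrac12(5\kb_1-a+4b)>0$, is actually a more complete conclusion than the paper's bare ``$\dot V<0$''. What is missing is Step 3 itself, and as you set it up it does not close.

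\textbf{Why the constant split fails.} If you split $-a\int x_\xi^2$ with a constant $\lambda$, the $x(1)^2$ coefficient is $ak_2+\kb_1+\lambda a$. This does not depend on $\varepsilon$, and it is never a positive multiple of $2ak_2+\kb_1+a$, because its $\kb_1$-to-$ak_2$ ratio is $1$ rather than $\tfrac12$. With $\varepsilon=1$, $\lambda=\tfrac12$, the gradient coefficient becomes $\kb_1-\tfrac a2$. That needs $\kb_1\le a/2$, which $\kb_1<a$ does not imply.

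\textbf{Your fallback has the implication backwards.} You need the stated conditions to force your coefficients to be nonpositive, not the other way round. For example, converting all of $(a-\kb_1)\int x_\xi^2$ with your sharper constant gives the $x(1)^2$ coefficient $a(k_2+1)$, i.e.\ it requires $k_2\le-1$. The hypothesis $2ak_2+\kb_1+a<0$ does not give this: take $\kb_1=0$, $k_2=-0.6$.

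\textbf{The choice that works.} Take $\varepsilon=1$ and apply $-\int x_\xi^2\le x(1)^2-\tfrac12\int x^2$ to exactly half of the net term $(a-\kb_1)\int x_\xi^2$. In your notation this is the parameter-dependent split $\lambda=(a-\kb_1)/(2a)\in(0,\tfrac12]$. The coefficients of $x(1)^2$, $\int x_\xi^2$ and $\int x^2$ are then $\tfrac12(2ak_2+\kb_1+a)$, $\tfrac12(\kb_1-a)$ and $\tfrac14(5\kb_1-a+4b)$, matching the three hypotheses exactly.

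The paper gets the same numbers by converting all of $(a-\kb_1)\int x_\xi^2$ with the weaker bound $\int_0^1x^2\,\td\xi\le 2x(1)^2+4\int_0^1x_\xi^2\,\td\xi$. That is precisely your bound applied to half the term, with the other half discarded. Leaving the fraction free in your version gives a one-parameter family of sufficient conditions, and the theorem is its half-split member.
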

\begin{proof}
Consider the Lyapunov functional and its time-derivative
$$V = \frac{1}{2} \int_{0}^{1} x^2(\xi, t) \, \td\xi, \quad \dot{V} = \int_{0}^{1} x x_{t}\td\xi $$
Now, using \eqref{dynamics1}, it follows that
\begin{align*}
\dot{V} & = a \int_0^1 x x_{\xi\xi} \, \td\xi + b \int_0^1 x^2 \, \td\xi \\
        &= a \left[ x(1)x_\xi(1) - x(0)x_\xi(0) \right] - a \int_0^1 x_\xi^2 \, \td\xi + b \int_0^1 x^2 \, \td\xi.
        \end{align*}
Using boundary conditions in \eqref{dynamics1},
\begin{align*}
 &x_\xi(0) = k_1 x(0) \; \text{and} \;x_\xi(1) = k_2 x(1), \\
&\dot{V} 
= a k_2 x(1)^2 - a k_1 x(0)^2 - a \int_0^1 x_\xi^2 \, \td\xi + b \int_0^1 x^2 \, \td\xi \\
&\text{Using} \; \kb_1 = \max\{0, -ak_1\}, \text{we obtain}\\
&\dot{V} \leq a k_2 x(1)^2 +  \kb_1 x(0)^2 - a \int_0^1 x_\xi^2 \, \td\xi + b \int_0^1 x^2 \, \td\xi \\
&\text{Using Agmon's inequality:} \\ &x(0)^2 \leq x(1)^2 + 2 \sqrt{\int_0^1 x_\xi^2\,\td\xi} \sqrt{\int_0^1 x^2\,\td\xi}, \; \text{it follows that}\\
&\dot{V} \leq a k_2 x(1)^2 + \kb_1 \left[ x(1)^2 + 2 \sqrt{\int_0^1 x_\xi^2\,\td\xi} \sqrt{\int_0^1 x^2\,\td \xi} \right] \\
        &\quad - a \int_0^1 x_\xi^2 \, \td\xi + b \int_0^1 x^2 \, \td\xi \\
&\text{Using Young's inequality:}\; \text{if}\; a\geq 0,\; b\geq 0 \;\text{then}\\& 2ab \leq a^2 + b^2, \;\text{it follows that} \\
&\dot{V} \leq (a k_2 +  \kb_1) x(1)^2 +  \kb_1 \left[ \int_0^1 x_\xi^2 \, \td\xi + \int_0^1 x^2 \, \td\xi \right] \\
        &\quad - a \int_0^1 x_\xi^2 \, \td\xi + b \int_0^1 x^2 \, \td\xi \\
        &= (a k_2 +  \kb_1) x(1)^2 -  (-\kb_1 +a) \int_0^1 x_\xi^2 \, \td\xi \\
        &+ ( \kb_1 + b) \int_0^1 x^2 \, \td\xi \\
&\text{If}\;\;  \kb_1 - a < 0\;\; \text{and using Poincaré inequality: }\\&\text{for any x continuously differentiable on [0,\;1]} \\ &\int_0^1 x^2\,d\xi \leq 2x(1)^2 + 4\int_0^1 x_\xi^2\,\td\xi, \; \text{it follows that} \\
&\dot{V} \leq (a k_2 + \kb_1) x(1)^2 +  (\kb_1 + b ) \int_0^1 x^2 \, \td\xi \\
        &\quad + (a - \kb_1 ) \left[ \frac{x(1)^2}{2} -  \int_0^1 \frac{x^2}{4} \, \td\xi \right] \\
        &= \left(a k_2 + \frac{\kb_1}{2} + \frac{a}{2}\right) x(1)^2 + \left(\frac{5}{4}\kb_1 - \frac{a}{4} + b  \right) \int_0^1 x^2 \, \td\xi 
\end{align*}
If $\kb_1 -a < 0, \;2a k_2 + \kb_1 + a < 0, \; 5  \kb_1 - a + 4 b < 0$
then $\dot{V} < 0$, which implies asymptotic stability in $L^2$- norm of the equilibrium state $x_e=0$ in the Lyapunov sense for the parabolic PDE system.
\end{proof}

The stability condition for the homogeneous version of the PDE \eqref{dynamics1} (i.e., with $b=0$ in \eqref{dynamics1}), is stated next.
\begin{corollary}\longthmtitle{Stability of homogeneous version of  \eqref{dynamics1}}\label{stability1-homogeneous}
 The system  \eqref{dynamics1}  with $b=0$ is asymptotically stable about the equilibrium $x_e=0$ if 
 $$  \kb_1 - a < 0,\quad  5\kb_1 - a < 0,\quad
 2ak_2 + \kb_1 + a < 0$$
where $\kb_1 = \max\{0, -ak_1\}$.
\end{corollary}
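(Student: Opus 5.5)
This is a direct specialization of Theorem~\ref{stability1}, so the plan is to substitute $b=0$ into its hypotheses and conclusion. No new Lyapunov argument is needed: with $b=0$, system \eqref{dynamics1} is exactly the PDE covered by Theorem~\ref{stability1}, with the same boundary feedback $x_\xi(0,t)=k_1x(0,t)$, $x_\xi(1,t)=k_2x(1,t)$ and the same equilibrium $x_e=0$.

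The key steps, in order, are as follows. First, I will check that nothing in the statement or proof of Theorem~\ref{stability1} requires $b\neq 0$. The parameter $b$ enters only through the term $b\int_0^1 x^2\,\td\xi$ in $\dot V$, and it does not enter the definition $\kb_1=\max\{0,-ak_1\}$. Second, I will substitute $b=0$ into the three sufficient conditions:
\begin{itemize}
\item $\kb_1-a<0$ is unchanged;
\item $5\kb_1-a+4b<0$ becomes $5\kb_1-a<0$;
\item $2ak_2+\kb_1+a<0$ is unchanged.
\end{itemize}
Third, I will invoke Theorem~\ref{stability1} to conclude asymptotic stability about $x_e=0$ in the $L^2$ sense.

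If a self-contained argument is preferred, I will instead rerun the chain of estimates from the proof of Theorem~\ref{stability1} with $b=0$. These are the integration by parts, the bound $-ak_1x(0)^2\le \kb_1x(0)^2$, Agmon's inequality, Young's inequality, and the Poincaré-type inequality. This yields
\[
\dot V\le \Big(ak_2+\tfrac{\kb_1}{2}+\tfrac a2\Big)x(1)^2+\Big(\tfrac54\kb_1-\tfrac a4\Big)\int_0^1x^2\,\td\xi .
\]
Both coefficients are negative under the stated conditions.

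I do not expect a genuine obstacle. The only point worth a remark is that, when $b=0$, the condition $5\kb_1-a<0$ implies $\kb_1-a<0$ because $\kb_1\ge 0$. So the first condition is redundant, but keeping it for parallelism with Theorem~\ref{stability1} is harmless.
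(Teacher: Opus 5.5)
Your proposal is correct and takes the paper's approach: the paper gives no separate proof, because the corollary is just Theorem~\ref{stability1} with $b=0$ substituted, which turns $5\kb_1-a+4b<0$ into $5\kb_1-a<0$, and your self-contained rerun reproduces the paper's final bound on $\dot V$ with $b=0$. Your remark that, when $b=0$, the condition $5\kb_1-a<0$ already implies $\kb_1-a<0$ (since $\kb_1\ge 0$) is also correct.
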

Note that Theorem \ref{stability1} and Corollary \ref{stability1-homogeneous} provide a set of sufficient conditions only which form a  feasible set for the CCD problem \eqref{ccd1}. To compute an optimal solution, we first reformulate problem \eqref{ccd1}  into a computationally tractable format in the next sub-section.
\subsection{Reformulated CCD problem}
\label{reform1}
We first semi-discretize the PDE in \eqref{ccd1} in the spatial domain to obtain a set of ordinary differential equations (ODEs). We also convert the double integral objective function in \eqref{ccd1} into the standard single integral quadratic optimal control objective function \cite{lewis2012optimal} using trapezoidal rule \cite{atkinson1978introduction}.
\begin{proposition}\longthmtitle{CCD problem discretization}\label{discrete1}
Consider the dynamics \eqref{dynamics1} and the problem \eqref{ccd1}. Let $\Delxi=\frac{1}{N-1}$ for some positive integer $N>1$. Then the system \eqref{dynamics1} can be rewritten as
$$\dot{X}(t) = A X(t), \quad X(0) = X_0,$$
for some $X(t) \in \mathbb{R}^N $, and $ A \in \mathbb{R}^{N \times N} $ with 
$$A = \scalebox{0.9}{$\frac{1}{\Delxi^2}$}\scalebox{0.65}{$
\begin{pmatrix}
-a -a k_1\Delxi + b\Delxi^2 & a & 0 & \cdots & 0 \\
a & -2a + b\Delxi^2 & a & \cdots & 0 \\
\vdots & \ddots & \ddots & \ddots & \vdots \\
0 & \cdots & a & -2a + b\Delta\xi^2 & a \\
0 & \cdots & 0 & a & -a + ak_2\Delta\xi + b\Delxi^2
\end{pmatrix}.$}$$ The cost $J$ in \eqref{ccd1} is rewritten as
$$J_d = f_d(d) + \int_0^\infty \left( X^\top Q_d X + U^\top R_d U \right) \td t$$ with some $U=KX,$
\begin{align*}
Q_d &= q\,\frac{\Delxi}{2} \,\operatorname{diag}\left(\begin{bmatrix}
   \frac{1}{2}& 1 \ldots 1& \frac{1}{2} 
\end{bmatrix}\right),\\
R_d &= r \,\operatorname{diag}\left(\begin{bmatrix}
   1&1
\end{bmatrix}\right),\;K  = \begin{pmatrix}
k_1 & 0 & \ldots & 0 \\
0 & \ldots &  0 & k_2
\end{pmatrix}.
\end{align*}
\end{proposition}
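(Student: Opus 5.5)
Let $\xi_i=(i-1)\Delxi$ for $i=1,\dots,N$ and set $X(t)=[x_1(t),\dots,x_N(t)]\T$ with $x_i(t)\approx x(\xi_i,t)$. The argument is a method-of-lines construction: we produce a finite-difference semi-discretization of \eqref{dynamics1}, read off $A$ from its rows, and then apply quadrature to the cost in \eqref{ccd1}.

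\emph{Interior nodes.} For $2\le i\le N-1$, approximate $x_{\xi\xi}(\xi_i,t)$ by the central difference $(x_{i+1}-2x_i+x_{i-1})/\Delxi^2$. This gives
$$\dot x_i=\frac{1}{\Delxi^2}\bigl(a x_{i-1}+(-2a+b\Delxi^2)x_i+a x_{i+1}\bigr),$$
which is exactly the tridiagonal interior rows of $A$.

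\emph{Boundary nodes.} This is the step that needs care, because the stated corner entries $-a-ak_1\Delxi+b\Delxi^2$ and $-a+ak_2\Delxi+b\Delxi^2$ must be matched. At $\xi=0$, write
$$x_{\xi\xi}(0)\approx \frac{x_\xi(\Delxi/2)-x_\xi(0)}{\Delxi}.$$
Replace $x_\xi(\Delxi/2)$ by $(x_2-x_1)/\Delxi$ and $x_\xi(0)$ by the boundary condition $k_1x_1$. This yields
$$\dot x_1=\frac{1}{\Delxi^2}\bigl((-a-ak_1\Delxi+b\Delxi^2)x_1+a x_2\bigr).$$
The same construction at $\xi=1$ uses $x_{\xi\xi}(1)\approx (k_2x_N-(x_N-x_{N-1})/\Delxi)/\Delxi$ and gives the last row. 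I would state this half-cell, one-sided scheme explicitly as the chosen discretization. The proposition only asserts that the system "can be rewritten" in this form, so the claim is that this consistent scheme produces $A$; no convergence statement is required. Stacking the $N$ equations gives $\dot X=AX$ with $X_0=[x_0(\xi_1),\dots,x_0(\xi_N)]\T$.

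\emph{Cost.} The controls are $u_1=k_1x(0,t)=k_1x_1$ and $u_2=k_2x(1,t)=k_2x_N$, so $U=KX$ with the stated $K$. The spatial integral of $ru\T u$ does not depend on $\xi$ and integrates over $[0,1]$ to $ru\T u=U\T R_dU$ with $R_d=rI$. For $\int_0^1 qx^2\,\td\xi$, apply the composite trapezoidal rule on the grid. This gives
$$q\,\Delxi\Bigl(\tfrac12x_1^2+\textstyle\sum_{i=2}^{N-1}x_i^2+\tfrac12x_N^2\Bigr)=X\T\bigl(q\Delxi\,\operatorname{diag}(\tfrac12,1,\dots,1,\tfrac12)\bigr)X.$$

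This computed weight is $q\Delxi$, whereas the proposition writes $Q_d=q\frac{\Delxi}{2}\operatorname{diag}(\tfrac12,1,\dots,1,\tfrac12)$. I would need to reconcile the extra factor of $\frac12$, either by reading the paper's trapezoidal weights as absorbing a constant rescaling of $q$ or by treating it as a convention. The factor does not affect the structure of $J_d$, only the value of the weight.

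Integrating in $t$ and adding $f_d(d)$ then gives $J_d$. The main obstacle is the boundary-row derivation and the bookkeeping of constant factors. The rest is routine substitution.
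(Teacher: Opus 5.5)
Your algebra reproduces the stated rows of $A$, but the approximation you use to motivate the boundary rows is mis-scaled, and the claim that the scheme is consistent is false. The points $0$ and $\Delxi/2$ are $\Delxi/2$ apart, so $\bigl(x_\xi(\Delxi/2)-x_\xi(0)\bigr)/\Delxi\approx\tfrac12x_{\xi\xi}(0)$, not $x_{\xi\xi}(0)$. Dividing by the correct spacing gives the standard half-cell finite-volume row $\frac{1}{\Delxi^2}\bigl(-2a-2ak_1\Delxi+b\Delxi^2,\;2a,\;0,\dots,0\bigr)$, which is not the stated one.

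No alternative motivation can repair this. Insert a smooth solution with $x_\xi(0)=k_1x(0)$ into the first row of the stated $A$: you get $\tfrac a2x_{\xi\xi}(0)+bx(0)+O(\Delxi)$ instead of $ax_{\xi\xi}(0)+bx(0)$. So the boundary truncation error is $O(1)$ for this matrix, however it is derived.

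The paper makes no consistency claim and reaches the rows using only standard difference quotients. It applies the central second difference at every node $i=1,\dots,N$ with ghost nodes $x_0,x_{N+1}$. It then eliminates the ghosts via the backward/forward boundary differences $(x_1-x_0)/\Delxi=k_1x_1$ and $(x_{N+1}-x_N)/\Delxi=k_2x_N$. This gives $x_0=(1-k_1\Delxi)x_1$ and $x_{N+1}=(1+k_2\Delxi)x_N$, which yield exactly the corner entries. Since $(x_2-2x_1+x_0)/\Delxi^2=\bigl[(x_2-x_1)/\Delxi-(x_1-x_0)/\Delxi\bigr]/\Delxi$, this is your flux difference with the boundary flux placed at $-\Delxi/2$, where the spacing $\Delxi$ is the right one. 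Adopt that derivation, or simply define the boundary rows as the chosen scheme (which the proposition permits), and delete the word ``consistent''.

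On the cost you are right, and there is nothing to reconcile. The paper's own proof writes the quadrature as $\frac{\Delxi}{2}\bigl[qx_1^2+2\sum_{i=2}^{N-1}qx_i^2+qx_N^2\bigr]$, which equals your $X\T\bigl(q\Delxi\operatorname{diag}(\tfrac12,1,\dots,1,\tfrac12)\bigr)X$. The displayed $Q_d=q\frac{\Delxi}{2}\operatorname{diag}(\tfrac12,1,\dots,1,\tfrac12)$ applies the factor $\tfrac12$ twice: its weights sum to $\tfrac12$ rather than $1$. This is a typo in the statement. State the corrected $Q_d$ instead of absorbing the discrepancy into a rescaled $q$ or a ``convention''. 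Your treatment of $U=KX$ and $R_d=rI$ agrees with the paper.
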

\begin{proof}
As $\xi \in [0, 1],$ partition [0, 1] in N-1 intervals $[\xi_{i-1}, \xi_i], \;i = 1, \dots,N$ of equal lengths with $\Delta \xi = \frac{1}{N-1}.$
Discretizing system~\eqref{dynamics1} in spatial domain\text{\cite{Thomas1995}}, where $x_i(t)$ represents $x(\xi_i, t)$ we have 
\begin{align}  \label{discretized system}  
 \dot{x_i}(t)  = a\frac{x_{i+1}(t)-2x_{i}(t)+x_{i-1}(t)}{(\Delta\xi)^2} + bx_i(t) \notag\\
 = a\frac{x_{i+1}(t)}{(\Delta\xi)^2}+ a\frac{x_{i-1}(t)}{(\Delta\xi)^2}
 +\left(b-\frac{2a}{(\Delta\xi)^2}\right)x_{i}(t)\\
 i = 1,2,....,N \notag
 \end{align}
 Now, to apply the Neumann boundary conditions \eqref{dynamics1} using finite differences, we introduce \emph{ghost nodes} at \(i = 0\) and \(i = N+1\), located just outside the physical domain. These fictitious nodes are used to approximate the spatial derivatives at the boundaries and are eliminated using the boundary conditions.
 Using backward and forward difference \cite{Thomas1995} at $\xi = 0 ,\; \xi = 1 $ respectively, it follows that
 \begin{align} \label{discretitized boundary condition}
\frac{x_1(t) - x_0(t)}{\Delta\xi} = k_1x_1(t),\;\;
 \frac{x_{N+1}(t) - x_N(t)}{\Delta\xi} = k_2x_N(t)
  \end{align}
 using \eqref{discretized system} and  \eqref{discretitized boundary condition} , we have
\begin{align} \label{system of odes}
\dot{x}_1(t) &=  a \dfrac{x_2(t)}{(\Delta\xi)^2} + \left( b + \dfrac{-a - a k_1 \Delta\xi}{(\Delta\xi)^2} \right) x_1(t),\notag\\ 
\dot{x}_i(t) &= a \dfrac{x_{i+1}(t) + x_{i-1}(t)}{(\Delta\xi)^2} + \left( b - \dfrac{2a}{(\Delta\xi)^2} \right) x_i(t);\\
& i = 2, \ldots, N-1,\notag\\ 
\dot{x}_N(t) &= a \dfrac{x_{N-1}(t)}{(\Delta\xi)^2} + \left( b + \dfrac{-a + a k_2 \Delta\xi}{(\Delta\xi)^2} \right) x_N(t) \notag, 
\end{align}
For $X = [ x_1 \; x_2 \ldots x_{N-1} \;x_N]\T \in \mathbb{R}^N$, \eqref{system of odes} becomes the system of ODEs as \\
$$\dot{X} = AX$$ 
where
$$
A = \frac{1}{\Delta\xi^2}
\scalebox{0.65}{$
\begin{pmatrix}
-a -ak_1\Delta\xi + b\Delta\xi^2 & a & 0 & 0 \\
a & -2a + b\Delta\xi^2 & a  & 0 \\
\vdots & \ddots & \ddots  & \vdots \\
0 & \cdots & a & a \\
0 & \cdots & 0 &  -a + ak_2\Delta\xi + b\Delta\xi^2
\end{pmatrix}
$}
$$
The cost function is 
$  J = f_d(d) + \int_{0}^{\infty} \int_{0}^{1}(qx^2+ru\T u) \td\xi \td t = f_d(d) + \int_{0}^{\infty} \int_{0}^{1}\left(qx^2+r(u^2_1 + u^2_2)\right) \td\xi \td t.$ Using trapezoidal rule \cite{atkinson1978introduction} and  $u_1 = k_1x(0), \; u_2 = k_2x(1),$  from \eqref{dynamics1} we have
\begin{align*}
 J_d = f_d(d) +& \int_{0}^{\infty}\left(\frac{\Delta\xi}{2}\left[ qx_1^2 + 2\sum_{i = 2}^{N-1}qx_i^2 + qx_N^2\right]\td t\right) \\ 
&+ (\int_{0}^{\infty}\left([rk_1^2x_1^2 + rk_2^2x_N^2]\td t \right)) 
\end{align*}
Now, for $U = [u_1 \;\; u_2]\T = [k1x_1 \;\; k_2x_N]\T = KX$
$$ J_d =  f_d(d) +  \int_{0}^{\infty}\left( X\T (t)Q_dX(t) +U\T (t)R_dU(t)\right)\td t$$  $$= f_d(d) + \int_0^\infty X\T (Q_d+K\T R_dK)X \td t $$ 
where 
$$Q_d = q\frac{\Delta\xi}{2}  \operatorname{diag}\left(\begin{bmatrix}
   \frac{1}{2}& 1  \ldots  1& \frac{1}{2} 
\end{bmatrix}\right), \quad R_d = r \operatorname{diag}\left(\begin{bmatrix}
   1&1
\end{bmatrix}\right)$$

\end{proof}
Note that in Proposition \ref{discrete1}, $q\geq 0,\; r>0$ implies  $Q_d\succeq 0$ and $R\succ 0$. The system matrix $A$ depends on both the design parameter $d$ and the boundary feedback gains $k_1$ and $k_2$. One may write the discrete dynamic system in the standard feedback closed-loop format as follows.
\begin{remark}\longthmtitle{Relation to standard closed-loop system}\label{closed-loop}
		{\rm 
		In Proposition \ref{discrete1}, $A$ can be written as $A=A_0+BK$ where
$$ A_0 = \frac{1}{\Delta\xi^2}
\scalebox{0.65}{$
\begin{pmatrix}
-a  + b\Delta\xi^2 & a & 0 & \cdots & 0 \\
a & -2a + b\Delta\xi^2 & a & \cdots & 0 \\
\vdots & \ddots & \ddots & \ddots & \vdots \\
0 & \cdots & a & -2a + b\Delta\xi^2 & a \\
0 & \cdots & 0 & a & -a  + b\Delta\xi^2
\end{pmatrix}
$}, $$
$$B =  \frac{1}{\Delta\xi^2} \begin{pmatrix}
- a\Delta\xi & 0 \\
0 & 0 \\
\vdots & \vdots \\
0 &  0 \\
0 & a\Delta\xi
\end{pmatrix},\;
 K  = \begin{pmatrix}
k_1 & 0 & 0 & \cdots & 0 \\
0 & \cdots & 0 & 0 & k_2
\end{pmatrix}.
$$}		\oprocend
\end{remark} 
  Using Proposition \ref{discrete1} and Theorem \ref{stability1}, we write the discrete (in spatial coordinate)  version of the CCD problem \eqref{ccd1} as
\begin{align}
 \label{ccd2}
&\notag \min_{d,\,k_1,\,k_2}  \; J_d = f_d(d) + \int_0^\infty \left( X\T Q_d X + U\T R_d U \right) \td t\\
 \notag \text{s.t.} \quad &\dot{X} = A(d,k_1,k_2) X, \quad X(0) = X_0, \quad U=K(k_1,k_2)X,\\
& \kb_1 - a < 0, \quad 5\kb_1 -a +4b < 0,\\
&\notag 2a k_2+\kb_1 +a <0, \quad \kb_1 = \max\{0, \; -a k_1\},\\
& \notag(\underline{d},\,\underline{k}_1,\,\underline{k}_2)  \leq (d,\,k_1,\,k_2)\leq (\overline{d},\,\overline{k}_1,\,\overline{k}_2),\\
&\notag A(d,\,k_1,\,k_2)\text{  is Hurwitz}. 
\end{align}
The abstract condition $A$ to be Hurwitz ensures stability of the designed system, i.e., $\Vert X(t)\Vert \rightarrow 0$ as $t\rightarrow \infty$ \cite{khalil2002nonlinear}.   The problem  \eqref{ccd2} is non-convex, nonlinear, time-dependent, and with the abstract stability constraint is still challenging to solve. From optimal control theory \cite{lewis2012optimal} we have
$$\int_0^\infty \left( X^\top Q_d X + U^\top R_d U \right)\td t=\operatorname{tr}(P X_0 X_0^\top).$$
where $X_0 \in \mathbb{R}^{N}$ denotes the discretized initial condition vector obtained from the PDE initial profile $x_0(\xi)$ through the spatial discretization,  $P\succ 0$ is the solution to
\begin{align}
    \label{lyap1}
    A\T P + P\, A + Q_d + K\T R_d K = 0.
\end{align}
As $Q_d\succeq 0$ and $R_d\succ 0$, the Lyapunov equation \eqref{lyap1} admits a positive-definite solution $P$ when $A$ is Hurwitz
\cite{lewis2012optimal}.
Using \eqref{lyap1}, the problem \eqref{ccd2} is equivalently rewritten as
\begin{align}
 \label{ccd3}
&\notag \min_{d,\,k_1,\,k_2,\,P\succ 0}  \; J_f = f_d(d) + \operatorname{tr}(P X_0 X_0^\top), \\
 \notag \text{s.t.} \quad &A\T P + P\, A + Q_d + K\T R_d K = 0, P=P^\top\succ0.\\
& \kb_1 - a < 0, \quad 5\kb_1 -a +4b < 0,\\
&\notag 2a k_2+\kb_1 +a <0, \quad \kb_1 = \max\{0, \; -a k_1\},\\
&\notag (\underline{d},\,\underline{k}_1,\,\underline{k}_2)  \leq (d,\,k_1,\,k_2)\leq (\overline{d},\,\overline{k}_1,\,\overline{k}_2).
\end{align}
The problem \eqref{ccd3} is time-independent, nonconvex, and nonlinear with algebraic constraints. \eqref{ccd3}  is iteratively solved in a computationally tractable manner using a gradient-based method presented in the next section.
\section{Solution Procedure}
\label{sol-procedure1}
In this section we develop a gradient descent procedure \cite{Bertsekas}  to compute an optimal solution to the problem \eqref{ccd3}. We first define the following set,
\begin{align}
      \label{set-def1}
         \Dc=\Set{(d,\,k_1,\,k_2)|
         \begin{array}{ll}
(\underline{d},\,\underline{k}_1,\,\underline{k}_2)  \leq (d,\,k_1,\,k_2)\leq (\overline{d},\,\overline{k}_1,\,\overline{k}_2),\\
     \kb_1 - a < 0, \; 5\kb_1 -a +4b < 0,\\
2a k_2+\kb_1 +a <0,\\
\kb_1 = \max\{0, \; -ak_1\},\\
A(d,\,k_1,\,k_2)\text{  is Hurwitz}.
\end{array}}.
  \end{align}
  The set $\Dc$ represents the admissible set of stabilizing design and controller parameters and is the feasible set of \eqref{ccd3}.
  Using \eqref{set-def1}, the problem \eqref{ccd3} is written as
  \begin{align}
 \label{ccd4}
& \min_{(d,\,k_1,\,k_2)\in \Dc}  \; J_f = f_d(d) + \operatorname{tr}(P X_0 X_0^\top).
\end{align}
Note that \eqref{ccd4} is the `\textit{approximate}' version of the CCD problem \eqref{ccd1}. 
We develop a constrained gradient-based method to solve \eqref{ccd4}. We first compute the gradient of $J_f$ with respect to $d,\;k_1,\;k_2$ in the next result.
\begin{lemma}\longthmtitle{Gradient computation}\label{gradient1} Consider the system \eqref{dynamics1}, CCD problem \eqref{ccd1} and its final approximation \eqref{ccd4} with $d\in\real^{n_d}$. Then the gradient $\nabla\,J_f(d,k_1,k_2) = \Big[\frac{\pd J_f}{\pd d}\;  \frac{\pd J_f}{\pd k_1}\; \frac{\pd J_f}{\pd k_2}\Big]\T$ where $\frac{\pd J_f}{\pd d_j}=\frac{\pd f_d}{\pd d_j}+\tr\big(\frac{\pd P}{\pd d_j}X_0 X_0^\top\big)$ for $j=1,\dots,n_d$ and $\frac{\pd J_f}{\pd k_i}=\tr\big(\frac{\pd P}{\pd k_i}X_0 X_0^\top\big)$ for $i=1,2$ computed at some $(d^0,\,k_1^0,\,k_2^0)\in\Dc$. Where  $\frac{\pd P}{\pd d_j}$ is the solution to $$A\T \frac{\pd P}{\pd d_j}+\frac{\pd P}{\pd d_j}A+\frac{\pd A\T}{\pd d_j}P+P\frac{\pd A}{\pd d_j}=0,$$
and $\frac{\pd P}{\pd k_i}$ is the solution to $$A\T \frac{\pd P}{\pd k_i}+\frac{\pd P}{\pd k_i}A+\frac{\pd A\T}{\pd k_i}P+P\frac{\pd A}{\pd k_i}+\frac{\pd K\T}{\pd k_i}R_dK+KR_d\frac{\pd K}{\pd k_i}=0.$$
\end{lemma}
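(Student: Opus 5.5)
The plan is to differentiate the Lyapunov equation \eqref{lyap1} implicitly, and then use linearity of the trace to pass to the gradient of $J_f$.

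\textbf{Step 1: $P$ is a smooth function of the parameters.} Fix $(d^0,k_1^0,k_2^0)\in\Dc$. By definition of $\Dc$, $A(d^0,k_1^0,k_2^0)$ is Hurwitz. Being Hurwitz is an open condition, so it persists on a neighbourhood of this point. The entries of $A$ (Proposition~\ref{discrete1}) and of $K$ are polynomial in $(a,b,k_1,k_2)$, hence smooth in $(d,k_1,k_2)$.

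On this neighbourhood the linear operator $\mathcal{L}_A(Y)=A\T Y+YA$ is invertible. Its eigenvalues are $\lambda_i+\lambda_j$, which have negative real part. Therefore
$$P=-\mathcal{L}_A^{-1}\big(Q_d+K\T R_dK\big).$$
This is a smooth (rational) function of the parameters, since $\mathcal{L}_A^{-1}$ depends smoothly on $A$ by the formula for the inverse of an invertible matrix. Equivalently, one can invoke the implicit function theorem on $F(P,\theta)=A\T P+PA+Q_d+K\T R_dK$: its partial derivative in $P$ is exactly $\mathcal{L}_A$, which is nonsingular. Either way, the partial derivatives $\frac{\pd P}{\pd d_j}$ and $\frac{\pd P}{\pd k_i}$ exist.

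\textbf{Step 2: differentiate \eqref{lyap1}.} Differentiate \eqref{lyap1} with respect to a parameter $\theta\in\{d_j,k_1,k_2\}$ using the product rule. This gives
$$A\T P_\theta+P_\theta A+A_\theta\T P+PA_\theta+\frac{\pd}{\pd\theta}\big(K\T R_dK\big)=0.$$
For $\theta=d_j$, neither $K$ nor $R_d$ depends on $d$. Likewise $Q_d$ depends only on $q$ and $\Delxi$. So the last term vanishes, which yields the first stated equation.

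For $\theta=k_i$, $R_d$ is constant, so the last term equals $\frac{\pd K\T}{\pd k_i}R_dK+K\T R_d\frac{\pd K}{\pd k_i}$. This yields the second equation. Note that the statement writes $KR_d\frac{\pd K}{\pd k_i}$, which should be read as $K\T R_d\frac{\pd K}{\pd k_i}$ for dimensional consistency. Uniqueness of $\frac{\pd P}{\pd\theta}$ as the solution of these Sylvester/Lyapunov equations follows again from invertibility of $\mathcal{L}_A$.

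\textbf{Step 3: gradient of $J_f$.} Since $J_f=f_d(d)+\tr(PX_0X_0\T)$ with $X_0$ independent of the parameters, linearity of the trace gives
$$\frac{\pd J_f}{\pd\theta}=\frac{\pd f_d}{\pd\theta}+\tr\Big(\frac{\pd P}{\pd\theta}X_0X_0\T\Big).$$
The first term is absent for $\theta=k_i$. Stacking these partials gives $\nabla J_f$.

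\textbf{Main obstacle.} The only nontrivial point is justifying the differentiability of $P$ and the solvability of the derivative equations, which rests on $A$ being Hurwitz at points of $\Dc$. The remaining steps are routine bookkeeping. We assume $f_d$ is differentiable.
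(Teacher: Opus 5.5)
Your proposal is correct and takes the same route as the paper, whose proof simply differentiates $J_f$ and the Lyapunov equation \eqref{lyap1} with respect to $d_j$ and $k_i$. Your Step 1, which shows smoothness of $P$ from the invertibility of $Y\mapsto A\T Y+YA$ when $A$ is Hurwitz, supplies a justification the paper leaves implicit, and you are also right that the last term of the second equation must read $K\T R_d\frac{\pd K}{\pd k_i}$ for dimensional consistency.
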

\begin{proof}
Differentiating $J_f$ and \eqref{lyap1} partially with respect to $d_j$ and $k_i$ gives the required result. 
\end{proof}
We now outline the constrained gradient descent procedure to solve \eqref{ccd4}.
\begin{algorithm}[H]
\caption{CCD Algorithm} 
\label{alg:GradientDescent}
\begin{algorithmic}[1]
\Require  $(d^0,\, k_1^0,\, k_2^0)\in \Dc$,  $Q_d$, $R_d$, $X_0$,  $\epsilon$, $\epsilon_1$, $\sigma$, $\beta$
\Ensure  $d^*$, $k_1^*$, $k_2^*$
\State Set: $j \gets 0$,  form $A(d^0,\, k_1^0,\, k_2^0)$
\State Compute $P^0$ using \eqref{lyap1} and $J^0_f$ using \eqref{ccd4}
\State Compute  $\nabla_k J_f^0=\nabla \,J_f(d^0,k_1^0,k_2^0)$ using Lemma \ref{gradient1}
\While{true} 
\State Compute step-size $s^j$ using Algorithm \ref{alg:Armijo}
\State $[{d\T}^{j+1} \; k_1^{j+1} \; k_2^{j+1}]\T=[{d\T}^{j} \; k_1^{j} \; k_2^{j}]\T\;-\;s^j\,\nabla\,J_f^j$ 
\State Compute $P^{j+1}$ using \eqref{lyap1} and $J^{j+1}_f$ using \eqref{ccd4}
\State Compute  $\nabla\, J_f^{j+1}=\nabla \,J_f(d^{j+1},k_1^{j+1},k_2^{j+1})$ using Lemma \ref{gradient1}
\If{ $\|\nabla_k J_f^{j+1}\| \leq \epsilon$ or $\vert J_f^j-J_f^{j-1}\vert \leq \epsilon_1$ }
    \State Go to Step 17
    \Else 
    \State $(d^j,\,k_1^j,\,k_2^j)\gets (d^{j+1},\,k_1^{j+1},\,k_2^{j+1})$,  $J_f^j\gets J_f^{j+1}$
    \State $\nabla\,J_f^j\gets \nabla\, J_f^{j+1}$, $j\gets j+1$
    \State Compute  $\nabla_k J_f^j$ using Lemma \ref{gradient1}
\EndIf
\EndWhile
\State \Return  $d^* \gets d^{j+1}$, $k_1^* \gets k_1^{j+1}$, $k_2^* \gets k_2^{j+1}$
\end{algorithmic}
\end{algorithm}
Next, we state the procedure for computing the step-size $s^j$ used in Step 6 of the Algorithm \ref{alg:GradientDescent}. The procedure is based on the Armijo step-size selection rule \cite{Bertsekas}.
\begin{algorithm}[H]
	\caption{Computation of step-size $s^j$}\label{alg:Armijo}
	\begin{algorithmic}[1]
		\Require $\beta,\; \sigma,\; d^j,\;k_1^j,\;,k_2^j,\;\nabla\, J_f^j,\; J_f^j$
		\Ensure $s^j$
        \State Set $s \gets 1$
        \While{1}
        \State $[{d\T}^{j+1} \; k_1^{j+1} \; k_2^{j+1}]\T=[{d\T}^{j} \; k_1^{j} \; k_2^{j}]\T\;-\;s\,\,\nabla\,J_f^j$ 
                 \If{$(d^{j+1},k_1^{j+1},k_2^{j+1})\in \Dc$}
\State Compute $P^{j+1}$ using \eqref{lyap1} and $J^{j+1}_f$ using \eqref{ccd4}
          \If{$J_f^{j+1} \leq J_f^j - \sigma s\Vert\nabla\, J_f^j\Vert^2$}
                \State Go to Step 15
            \Else
                \State $s \gets \beta s$
            \EndIf
        \Else
             \State $s \gets \beta s$
        \EndIf            
\EndWhile
\State \Return $s^j\gets s$
\end{algorithmic}	
\end{algorithm}
Typically  $\sigma \in [0.1,\,0.5],\;\beta \in [0.2,\,0.5]$  in Algorithm \ref{alg:Armijo}. Note that the  optimal solution   $(d^*,\, k_1^*,\, k_2^*)$
  computed from Algorithm \ref{alg:GradientDescent} for the CCD problem \eqref{ccd4} lies in the  set $\Dc$. By definition of $\Dc$, the sufficient stability conditions of Theorem \ref{stability1} are satisfied. Therefore, $(d^*,\,k_1^*,\,k_2^*)$ is a  stabilizing parameter set for the PDE system \eqref{dynamics1} and a feasible solution for \eqref{ccd1}.
\begin{remark}\longthmtitle{Relation to original CCD problem \eqref{ccd1}}\label{closed-loop-ccd}
		{\rm 
        The problem \eqref{ccd4} uses the discretized version of the PDE \eqref{dynamics1} and is an approximate version of the original CCD problem \eqref{ccd1}. The optimal solution to \eqref{ccd4} using Algorithm \ref{alg:GradientDescent} is a feasible solution to \eqref{ccd1}. Our numerical studies (Section \ref{examples}) show that this solution improves the performance of the PDE system \eqref{dynamics1} with respect to the CCD objective in \eqref{ccd4}.   
	A detailed analytical study of the CCD Algorithm \ref{alg:GradientDescent} with respect to the original CCD problem \eqref{ccd4} regarding its convergence and optimality properties is part of our future work.}		\oprocend
\end{remark} 
 Next, we present examples to justify our proposed theory.

\section{Examples}\label{examples}
The dynamics described by \eqref{dynamics1} represent real-world practical systems like heat transfer process,  the gas diffusion process, etc., \cite{strauss2007partial}. To demonstrate the efficacy of our proposed theory, we consider the following scenarios.
\begin{enumerate}[leftmargin=0.1cm, labelsep=0.1cm, align=left]
    \item[\textbf{Case 1}:] Given $a,\,b$ compute optimal $k_1, \,k_2$.
    \item[\textbf{Case 2}:] Given $b$, compute  $d=a$ and $k_1,k_2$  with $f_d(a)=0$.
   \item[\textbf{Case 3}:] Given $b$, compute  $d=a$ and $k_1,k_2$  with $f_d(a)=a^2$.
   \item[\textbf{Case 4}:] Compute $d = [a\;b]\T$ and  $k_1, k_2$ with  $f_d(d) = 0$.
   \item[\textbf{Case 5}:] Compute $d = [a\;b]\T$ and  $k_1, k_2$  with $f_d(d) = a^2 +b^2$.
\end{enumerate}

Case 1 is the standard controller design problem. 
In Cases 2 and 4 are CCD problems with no design objective with the design variables affecting the system performance through the dynamics. In Cases 3 and 5, we perform CCD with design objective $f_d$. 

We use Algorithm \ref{alg:GradientDescent}  for the CCD synthesis process.  The values of the relevant parameters used for the CCD computation process are taken as: 
\begin{align*}
    &q =10,\; r =10^2, \; \epsilon=10^{-3},\; \epsilon_1=10^{-6},
\sigma=0.5,\;\beta=0.5. 
\end{align*}
The spatial domain is discretized into $N=50$ equally spaced grid points, $ \xi_i=\frac{i-1}{N-1},\qquad i=1,2,\ldots, 50. $

The initial state is as follows
\begin{align*}
x_0(\xi) &= \sin(\pi\xi),\\
X_0 &= \begin{bmatrix}
x_0(\xi_1)&
x_0(\xi_2)&
\cdots &
x_0(\xi_N)
\end{bmatrix}^T\\
&=
\begin{bmatrix}
\sin(\pi\xi_1)&
\sin(\pi\xi_2)&
\cdots&
\sin(\pi\xi_N)
\end{bmatrix}^T.
\end{align*}

 The initial control gain values are taken as $k_1^0 = 3 ,\; k_2^0 = -3$. For Case 2 and Case 3, the initial design parameter is taken as $a^0 = 1 $  and known $b=0.2$. For Case 4 and Case 5, the initial design parameters are taken as $a^0 = 1,\; b^0 = 0.2$ and 
 $[\underline{a}\; \underline{b}]^T \leq [a\; b]^T\leq [\overline{a} \;\overline{b}]^T$ where  $\underline{a}  = 1, \; \underline{b} = -10, \;\;\overline{a} = 10,\;\overline{b} =50$. 
 Our numerical analysis procedure is as follows.
\begin{itemize}
    \item We compute the initial and optimal values of the cost function in \eqref{ccd4}, i.e., $J_f^0,\; J_f^*$ respectively.
    \item We compute the  optimal variable values $a^*,\; b^*,\;k_1^*,\; k_2^*$.
    \item We solve the PDE  numerically using the MATLAB function `\textit{pdepe}' \cite{MATLAB1} for the spatial domain $\xi \in [0,\;1]$ and the time domain $t\in [0, \;30]$. The spatial domain was discretized in $50$ parts and the time domain was discretized in $2000$ parts.
    \item We compute the initial and the final cost function values in \eqref{ccd1}, i.e., $J^0,\; J^*$. This is done by numerically computing the double integral using the MATLAB function `\textit{trapz}'. The final cost is computed for $a^*,\; b^*,\;k_1^*,\; k_2^*$.
   \item We also compute the initial and final values of the control performance objective
\[
J_u=\int_{0}^{\infty}\int_{0}^{1}
\left(qx^2+ru^\top u\right)\,d\xi\,dt,
\]
denoted by $J_u^0$ and $J_u^*$, respectively. Note that $J_u$ represents only the quadratic control performance term and excludes the design objective term $f_d(d)$.
\end{itemize}
All computations are performed using MATLAB \cite{MATLAB1}. We compile all the results in Table \ref{tab:cases_updated}. Next, we discuss our results.

\begin{table}[!t]
\centering
\renewcommand{\arraystretch}{1.5}
\resizebox{\columnwidth}{!}{%
\begin{tabular}{c c c c c c c c c c c}
\hline
Case & $a^*$& $b^*$ & $k_1^*$ & $k_2^*$ & $J_f^0$ & $J_f^*$ &
$J^0$ & $J^*$ & $J_u^0$ & $J_u^*$ \\
\hline
1 & --  & --  & 0.5 & $-0.5$ & 39.5 & 13 & 39.9   & 14.7  & 39.9   & 14.7   \\
2 & 6.5 & --& 0.5 & $-0.5$ & 39.5 & 1.6 & 39.9  & 1.8  & 39.9    & 1.8  \\
3 & 1.7& --  & 0.5&  $-0.5$ & 40.5 & 9.8 & 40.9 & 10.7 & 39.9 & 7.98 \\
4 &6.1& $-1.5$& 0.5&  $-0.5$& 39.5  &  1.3 & 39.9  &  1.4&  39.9& 1.4\\
5 &1.6 & 0.0 &2.9  &$-2.9$  & 40.6 & 25.2 & 41 & 25.2  & 39.9 & 22.6 \\
\hline
\end{tabular}}
\caption{Comparison of Initial and Optimized Performance Indices}\label{tab:cases_updated}
\end{table}

\begin{figure}[!t]
\subfloat[Uncontrolled]
{
    \includegraphics[width=0.15\textwidth,height=0.15\textwidth]{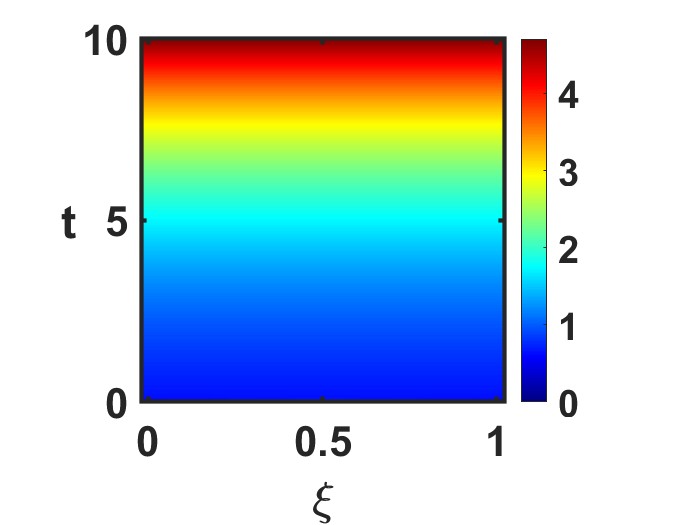}
    \label{fig:6.1}
}
\subfloat[CCD initialization]
{
    \includegraphics[width=0.15\textwidth,height=0.15\textwidth]{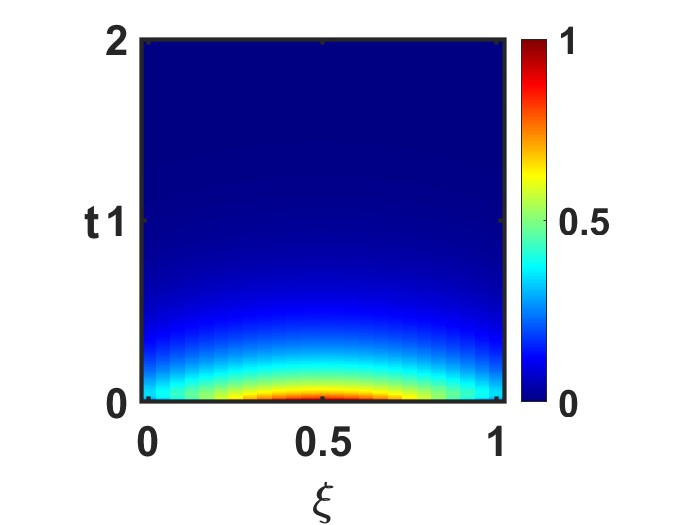}
    \label{fig:6.2}
}
\subfloat[Optimal CCD]
{
    \includegraphics[width=0.15\textwidth,height=0.15\textwidth]{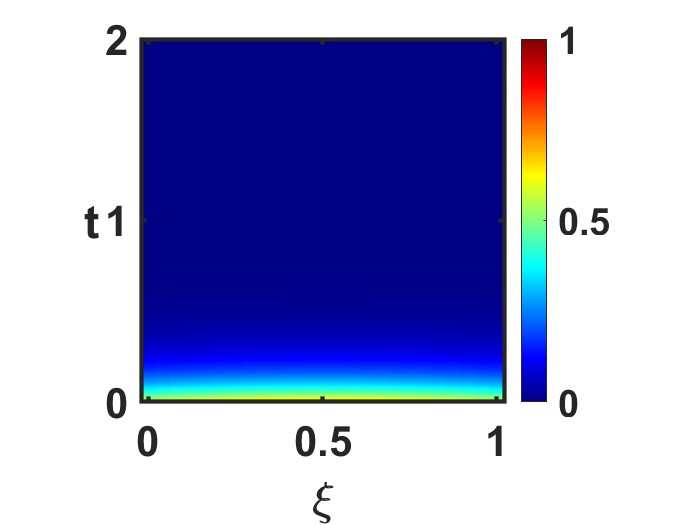}
    \label{fig:6.3}
}
\caption{State evolution for boundary feedback gains for Case 3.}
\label{fig:state_comparison}\
\end{figure}
\begin{figure}[!t]
 \subfloat[Case 1.]
{
    \includegraphics[width=0.15\textwidth,height=0.15\textwidth]{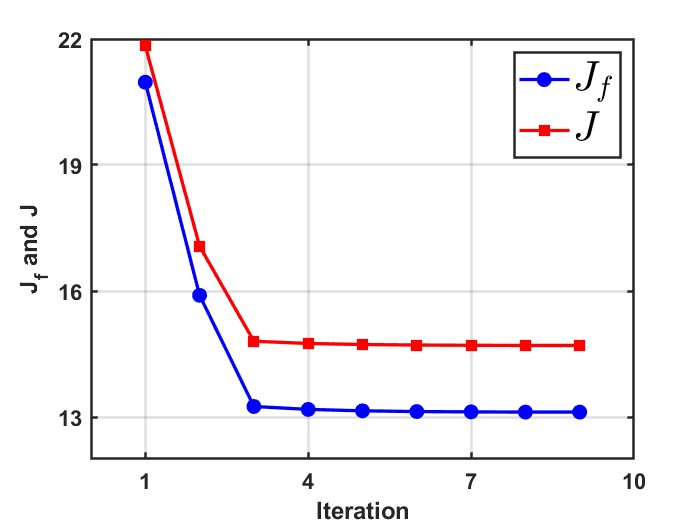}
    \label{fig:6.4}
}
\subfloat[Case 2.]
{
\includegraphics[width=0.15\textwidth,height=0.15\textwidth]{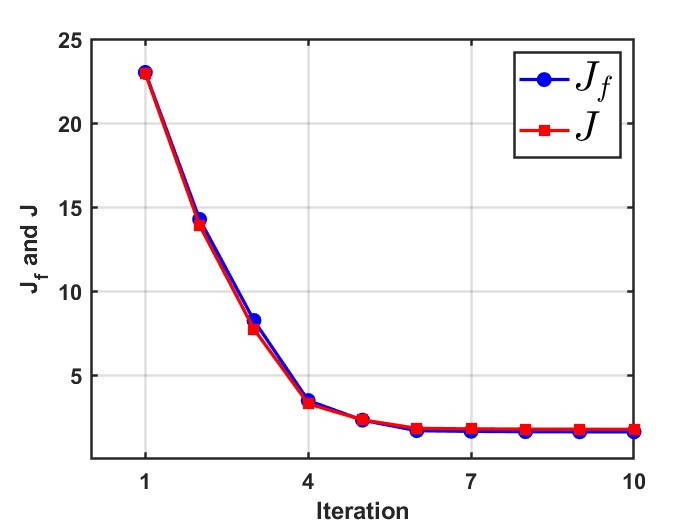}
    \label{fig:6.5}
}
\subfloat[Case 4.]
{
    \includegraphics[width=0.15\textwidth,height=0.15\textwidth]{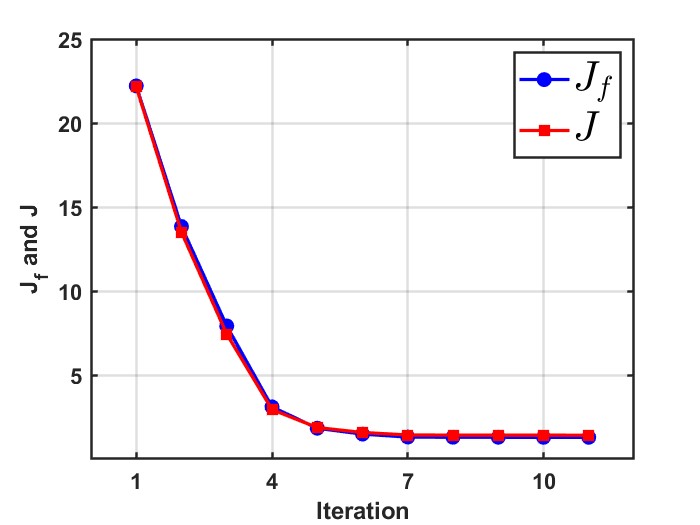}
    \label{fig:6.6}
}
\caption{Iteration-wise comparison of discrete and continuous cost functions as per CCD Algorithm \ref{alg:GradientDescent} for three cases.}
\label{fig:cost_plots}
\end{figure}
 \subsection{Discussion}
We discuss our results case by case. Fig. \ref{fig:6.1} shows the state evolution heat map of the unstable system, i.e., when $k_1=k_2=0$. Fig. \ref{fig:6.2} shows the state evolution heat map of the initial system for the CCD Algorithm \ref{alg:GradientDescent} which is the same for all the cases. Note that the initial system is stable but not optimal. 

\textbf{Case 1:} This case corresponds to the conventional controller design problem with fixed plant parameters. It serves as the baseline for comparison with the proposed CCD formulations.

\textbf{Case 2:} The diffusion coefficient $a$ is optimized together with the controller gains. Compared with Case~1, the additional design freedom significantly improves the system performance $J$. 

\textbf{Case 3:} The design objective $f_d(a)=a^2$ penalizes large diffusion coefficients. Here, the system performance $J$  improves after CCD; however, compared to Case 2, the control performance  $J_u$ is worse. This is due to the trade-off for the smaller value of $a$. 


\textbf{Case 4:} Simultaneous optimization of $a$, $b$ together with $k_1,\, k_2$  further improves  the system performance compared to Case 2. This is the best performing system with respect to $J_u$. The state evolution heat map for the optimal system is shown in Fig. \ref{fig:6.3}, illustrating the improved system performance.


\textbf{Case 5:} Introducing  $f_d(a,b)=a^2+b^2$ in the CCD process results in smaller values of $a$ and $b$. However, the trade-off is that this case has the worst control performance $J_u$. 

Fig. \ref{fig:cost_plots} shows the decrease in $J$ as $J_f$ decreases iteratively as per the CCD Algorithm \ref{alg:GradientDescent}. 
Note that the close agreement between $J_f$ and $J$ shows that the finite-dimensional optimization is a close approximation of the original PDE system.
The results demonstrate the efficacy and utility of our proposed  CCD formulation and solution procedure.
Depending on the practical design requirements, different CCD formulations can be adopted, as illustrated by the cases above.

\section{Conclusion}
In this paper, we have presented a control co-design (CCD) framework for a class of systems with parabolic PDE dynamics, for simultaneous optimization of plant and controller parameters.
 We have derived sufficient stability conditions for the PDE system and used them to develop a gradient-based CCD synthesis algorithm. The effectiveness of the proposed CCD framework is demonstrated through numerical examples. Our future work will be focused on the analytical study of the CCD algorithm with respect to convergence, optimality and scalability properties with extension to multi-dimensional spatial domains. 

\bibliographystyle{IEEEtran}
\bibliography{ref1}
\end{document}